\numberwithin{equation}{section}
\newtheorem{theorem}{Theorem}
\begin{document}
\author{Alexander E Patkowski}
\title{On some new Bailey pairs and new expansions for some Mock theta functions}
\date{\vspace{-5ex}}
\maketitle
\abstract{In this paper we offer some new identities associated with mock theta functions and establish new Bailey pairs related to indefinite quadratic forms. We believe our proof is instructive use of changing base of Bailey pairs, and offers new information on some Bailey pairs that have proven important in the study of Mock theta functions.}

\section{Introduction}
In the last few decades, considerable research has been done on establishing special Bailey pairs to relate $q$-series to indefinite binary quadratic forms. (For a definition of Bailey's lemma and its history see Warnaar [14].) This dates back to the work of Andrews [1], where in his study of mock theta functions, he found that
\begin{equation}(q)_{\infty}\sum_{n\ge0}\frac{q^{n^2}}{(1+q)(1+q^2)\cdots(1+q^n)}=\sum_{n\ge0}q^{n(5n+1)/2}(1-q^{4n+2})\sum_{|j|\le n}(-1)^jq^{-j^2},\end{equation}
which is Ramanujan's fifth order mock theta function $f_0(q).$ Subsequent work includes Zwegers thesis [14] which established a connection between the right side of (1.1) and real analytic modular forms through rewriting the summation bounds. For some more related material on mock theta functions see [7, 9, 10, 12]. \par The purpose of this paper is to collect some observations from previous work [1, 2] to construct new identities for mock theta functions by establishing
some interesting Bailey pairs. A subsequent consequence of this is a variety of new identities for $q$-series related to indefinite binary quadratic forms. 
\section{The Bailey pairs} 
In this section we will prove the desired Bailey pairs to obtain new relations among $q$-series related to indefinite quadratic forms in the following sections. To do this
we will need to state some known results and apply the same technique used in [5]. We use the notation $(\alpha_n(a,q), \beta_n(a,q))$ to be a Bailey pair where
\begin{equation}\beta_n(a,q^{k})=\sum_{0\le j\le n}\frac{\alpha_n(a,q^{k})}{(q^{k};q^{k})_{n-j}(aq^{k};q^{k})_{n+j}}.\end{equation}
If $k=1,$ then for convenience we simply put $(\alpha_n(a,q), \beta_n(a,q))=(\alpha_n, \beta_n).$ 
\\*
{\bf Lemma 2.1. [2]} \it The pair $(\alpha_n^{*}, \beta_n^{*})$ is a Bailey pair relative to $a$ where

\begin{equation}\alpha_n^{*}(a,b,c,q)=\frac{q^{n^2}(bc)^n(1-aq^{2n})(a/b)_n(a/c)_n}{(1-a)(bq,cq)_n}\sum_{0\le j\le n}\frac{(-1)^j(1-aq^{2j-1})(a)_{j-1}(b,c)_j}{q^{j(j-1)/2}(bc)^j(q,a/b,a/c)_j},\end{equation}
\begin{equation}\beta_n^{*}(a,b,c,q)=\frac{1}{(bq,cq)_n}.\end{equation}
\rm
We will also need some lemmas that follow from well-known special cases of Bailey's lemma.
\\*
{\bf Lemma 2.2. [5, (S2)]} \it If $(\alpha_n, \beta_n)$ is a Bailey pair relative to $a$ then so is $(\alpha_n', \beta_n')$ where

\begin{equation}\alpha_n'=a^{n/2}q^{n^2/2}\alpha_n,\end{equation}
\begin{equation}\beta_n'=\frac{1}{(-\sqrt{aq})_n}\sum_{0\le j\le n}\frac{(-\sqrt{aq})_ja^{j/2}q^{j^2/2}}{(q)_{n-j}}\beta_j.\end{equation}
{\bf Lemma 2.3. [5, (S1)]} If $(\alpha_n, \beta_n)$ is a Bailey pair relative to $a$ then so is $(\alpha_n', \beta_n')$ where

\begin{equation}\alpha_n'=a^{n}q^{n^2}\alpha_n,\end{equation}
\begin{equation}\beta_n'=\sum_{0\le j\le n}\frac{a^{j}q^{j^2}}{(q)_{n-j}}\beta_j.\end{equation}
\rm
Next is the reverse implication of [5, (D1)].
\\*
{\bf Lemma 2.4.} \it If $(\alpha_n, \beta_n)$ is a Bailey pair relative to $a$ then so is $(\alpha_n', \beta_n')$ where

\begin{equation}\alpha_n'(a^2,q^2)=\alpha_n,\end{equation}
\begin{equation}\beta_n'(a^2,q^2)=\frac{1}{(-aq)_{2n}}\sum_{0\le j\le n}\frac{(-1)^{n-j}q^{(n-j)^2}}{(q^2;q^2)_{n-j}}\beta_j.\end{equation}
\rm
Now we offer some new Bailey pairs.
\\*
{\bf Lemma 2.5.} \it The pair $(\alpha_n, \beta_n)$ is a Bailey pair relative to $q$ where
\begin{equation}\beta_n=\frac{(-1)^n(1-c)}{(q^2;q^2)_n(1-cq^n)},\end{equation}
\begin{equation}\alpha_n=q^{-n(n+1)/2}\alpha_n^{*}(q,-1,c,q),\end{equation}
where the $\alpha_n^{*}$ is given in (2.2).
\rm
\begin{proof} Put $a=q$ in Lemma 2.2, and suppose the left sides of (2.4)--(2.5) are given by the $a=q,$ $b=-1,$ case of (2.2)--(2.3). Then
we would need our $\alpha_n$ and $\beta_n$ to be given as in Lemma 2.5 by the uniqueness of Bailey pairs, because
\begin{equation}\frac{1}{(-q)_n(cq)_n}=\frac{1}{(-q)_n}\sum_{0\le j\le n}\frac{(-1)^jq^{j(j+1)/2}(1-c)}{(q)_{n-j}(q)_j(1-cq^j)}.\end{equation}\end{proof}
We need one more lemma before our main desired Bailey pair.
\\*
{\bf Lemma 2.6.} \it The pair $(\alpha_n, \beta_n)$ is a Bailey pair relative to $q$ where
\begin{equation}\beta_n=2\sum_{0\le j \le n}\frac{(-1)^jq^{j(j+1)}}{(q)_{n-j}(q^2;q^2)_j(1-c^2q^{2j})},\end{equation}
\begin{equation}\alpha_n=q^{n(n+1)/2}\left(\frac{\alpha_n^{*}(q,-1,c,q)}{(1-c)}+\frac{\alpha_n^{*}(q,-1,-c,q)}{(1+c)}\right),\end{equation}
where the $\alpha_n^{*}$ is given in (2.2). \rm
\begin{proof} To prove this pair we take the pair in Lemma 2.5, multiply both sides by $(1-c)^{-1},$ and then add the same pair to itself after $c$ has been replaced by $-c,$ to get

\begin{equation}\beta_n=2\frac{(-1)^n}{(q^2;q^2)_n(1-c^2q^{2n})},\end{equation}
\begin{equation}\alpha_n=q^{-n(n+1)/2}\left(\frac{\alpha_n^{*}(q,-1,c,q)}{(1-c)}+\frac{\alpha_n^{*}(q,-1,-c,q)}{(1+c)}\right).\end{equation}
Now insert this Bailey pair into Lemma 2.3. This follows from the observing that
\begin{equation}\frac{1}{1-cq^j}+\frac{1}{1+cq^j}=\frac{2}{1-c^2q^{2j}}.
\end{equation}
\end{proof}
We now obtain our main result concerning Bailey pairs.\newline
{\bf Lemma 2.7.} \it The pair $(\alpha_n, \beta_n)$ is a Bailey pair relative to $q^2$ where
\begin{equation}\beta_n(q^2,q^2)=\frac{2}{(-q^2)_{2n}(c^2;q^2)_{n+1}},\end{equation}
\begin{equation}\alpha_n(q^2,q^2)=q^{n(n+1)/2}\left(\frac{\alpha_n^{*}(q,-1,c,q)}{(1-c)}+\frac{\alpha_n^{*}(q,-1,-c,q)}{(1+c)}\right),\end{equation}
where the $\alpha_n^{*}$ is given in (2.2). \rm
\begin{proof} To obtain this pair, insert the pair in Lemma 2.6 into Lemma 2.4. This takes a Bailey pair relative to $(q,q)$ and produces a Bailey pair relative to $(q^2,q^2)$. This step is attractive because it will convert a pair with a $\beta_n$ that involves a sum into a simple truncated product. Note that the $\alpha_n$ computation is trivial, and for
the $\beta_n$ we compute
$$ \beta_n(q^2,q^2)=\frac{1}{(-q^2)_{2n}}\sum_{0\le j\le n}\frac{(-1)^{n-j}q^{(n-j)^2}}{(q^2;q^2)_{n-j}}\beta_j$$
\begin{equation}=\frac{2}{(-q^2)_{2n}}\sum_{0\le j\le n}\frac{(-1)^{n-j}q^{(n-j)^2}}{(q^2;q^2)_{n-j}}\left(\sum_{0\le i \le j}\frac{(-1)^iq^{i(i+1)}}{(q)_{j-i}(q^2;q^2)_i(1-c^2q^{2i})}\right).\end{equation}
Notice that the sum in (2.20) may be viewed as the coefficient of $z^n$ in
\begin{equation}\frac{(zq;q^2)_{\infty}}{(z)_{\infty}}\sum_{i\ge0}\frac{(-z)^iq^{i(i+1)}}{(q^2;q^2)_i(1-c^2q^{2i})},  \end{equation}
which is the coefficient of $z^n$ in
\begin{equation}\frac{1}{(z;q^2)_{\infty}}\sum_{i\ge0}\frac{(-z)^iq^{i(i+1)}}{(q^2;q^2)_i(1-c^2q^{2i})}.  \end{equation}
On the other hand, taking coefficients of $z^n$ in (2.22), we can see this is just
\begin{equation} \frac{1}{(c^2;q^2)_{n+1}}, \end{equation}
by equation (2.12). Hence the pair follows.
\end{proof}
It should immediately be observed by the reader that the $c\rightarrow0$ case is a well-known Bailey pair from the study [2], used to establish sixth order mock theta function expansions. It is therefore
 natural that the case $c=q^{1/2}$ would reveal new information.

\section{A related Bailey pair} 

This section notes the Bailey pair obtained by considering the taking of the pair in Lemma 2.5. and instead subtracting the
pair from itself when $c$ has been replaced by $-c.$ This is 

\begin{equation}\beta_n=2\frac{(-1)^ncq^n}{(q^2;q^2)_n(1-c^2q^{2n})},\end{equation}
\begin{equation}\alpha_n=q^{-n(n+1)/2}\left(\frac{\alpha_n^{*}(q,-1,c,q)}{(1-c)}-\frac{\alpha_n^{*}(q,-1,-c,q)}{(1+c)}\right).\end{equation}
Recall from [5, (L1)] that if $(\alpha_n, \beta_n)$ is a Bailey pair relative to $q$ then $(\alpha_n', \beta_n')$ is a Bailey pair relative to $1$ where
\begin{equation} \beta_n'=\sum_{j\ge0}^{n}\frac{q^{j^2}}{(q)_{n-j}}\beta_j\end{equation}
\begin{equation} \alpha_n'=(1-q)q^{n^2}\left(\frac{\alpha_n}{1-q^{2n+1}}-q^{2n-1}\frac{\alpha_{n-1}}{1-q^{2n-1}}\right).\end{equation}
Inserting (3.1)--(3.2) into this result gives us the Bailey pair $(\alpha_n^{1}, \beta_n^{1})$ relative to $a=1$ where
\begin{equation} \beta_n^{1}=2c\sum_{j\ge0}^{n}\frac{(-1)^jq^{j^2+j}}{(q^2;q^2)_j(q)_{n-j}(1-c^2q^{2j})}\end{equation}
\begin{equation} \alpha_n^{1}=(1-q)q^{n^2}\left(\frac{a_n}{1-q^{2n+1}}-q^{2n-1}\frac{a_{n-1}}{1-q^{2n-1}}\right),\end{equation}
where the $a_n$ is given by (3.2). We leave the $\alpha_n^{1}$ is this form for brevity. Applying the $a=1$ case of Lemma 2.4 gives us a new pair.
\\*
\\*
{\bf Lemma 3.1.} \it We have, the Bailey pair $(\alpha_n^{1'}, \beta_n^{1'})$ relative to $a=1$ where 
\begin{equation}\beta_n^{1'}(1,q^2)=\frac{2c}{(-q)_{2n}(c^2;q^2)_{n+1}},\end{equation}
\begin{equation}\alpha_n^{1'}(1,q^2)=(1-q)q^{n^2}\left(\frac{a_n}{1-q^{2n+1}}-q^{2n-1}\frac{a_{n-1}}{1-q^{2n-1}}\right),\end{equation}
where the $a_n$ is given in (3.2). \rm

\section{Mock theta functions} 

This section presents our main results of the paper, establishing new identities for mock theta functions, and introducing new $q$-series. 
\par
It is a clear observation that the case $c=q^{1/2}$ of Lemma 2.7 (and replacing $q$ by $q^2$) is the Bailey pair $(\alpha_n, \beta_n)$ relative to $(q^4, q^4)$ where
\begin{equation}\beta_n(q^4,q^4)=\frac{2}{(-q^4;q^2)_{2n}(q^2;q^4)_{n+1}},\end{equation}
\begin{equation}\alpha_n(q^4,q^4)=q^{n(n+1)}\left(\frac{(-1)^nq^{2n^2+n}(1+q^{2n+1})}{(1-q^2)}\sum_{|j|\le n}q^{-j^2}+\frac{q^{2n^2+n}(1-q^{2n+1})}{(1-q^2)}\sum_{|j|\le n}(-1)^jq^{-j^2}\right),\end{equation} We now state some interesting consequences.
\begin{theorem} We have,
\begin{equation}2\sum_{n\ge0}\frac{q^{2n(n+1)}}{(q^4;q^8)_{n+1}}=\frac{(-q^4;q^4)_{\infty}}{(-q^2;q^2)_{\infty}}\left(P_1(q)+P_1(-q)\right),\end{equation}
where $P_1(q)=\sum_{n\ge0}\frac{q^{2n^2+2n}}{(-q^2;q^2)_n}\sum_{0\le j\le n}\frac{q^{j^2+j}}{(-q;q^2)_{j+1}(q^2;q^2)_{n-j}}.$
\end{theorem}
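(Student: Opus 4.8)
The plan is to feed the explicit Bailey pair $(\alpha_n,\beta_n)$ relative to $(q^4,q^4)$ from (4.1)--(4.2) into a suitable limiting form of Bailey's lemma and then simplify both sides. On the $\beta_n$ side, since $(-q^4;q^2)_{2n}=(-q^2;q^2)_{2n+1}/(1+q^2)$ and $(q^2;q^4)_{n+1}$ combines with it, the weighted sum $\sum_{n\ge0}(q^4;q^4)_\infty\,\beta_n(q^4,q^4)$ (the standard $N\to\infty$, $\rho_1,\rho_2\to\infty$ specialization, with base $q^4$) should collapse; I would track the $q$-Pochhammer bookkeeping carefully to land on $2\sum_{n\ge0} q^{2n(n+1)}/(q^4;q^8)_{n+1}$, absorbing the factor $(-q^4;q^4)_\infty/(-q^2;q^2)_\infty$ that appears on the right. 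The point is that the left-hand side of the Theorem is exactly the ``$\beta$-side'' of Bailey's lemma applied to Lemma 2.7 at $c=q^{1/2}$, $q\mapsto q^2$.

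On the $\alpha_n$ side, I would split (4.2) into its two pieces according to the two sums $\sum_{|j|\le n}q^{-j^2}$ and $\sum_{|j|\le n}(-1)^j q^{-j^2}$. After multiplying by the appropriate factor $(aq)_\infty$-type constant and summing over $n$, each piece produces (by the same device used to pass from an indefinite-quadratic-form $\alpha_n$ back to a Hecke-type double sum, cf.\ (1.1) and the discussion of [1,2]) a theta-like double series; reorganizing the inner $j$-sum and shifting indices should convert $\sum_n\alpha_n(q^4,q^4)\cdot(\text{weight})$ into the $P_1(q)+P_1(-q)$ combination. The $(-1)^n$ in the first piece together with the $q^{-j^2}$ (no sign) versus $(-1)^jq^{-j^2}$ in the second is what, after the standard folding of $\sum_{|j|\le n}$ into a product $(q)_\infty$-type expression and re-expansion, produces the $q\mapsto -q$ symmetrization; so the two terms of (4.2) correspond respectively to $P_1(\pm q)$.

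Concretely, the key steps in order are: (i) recall Bailey's lemma in the form $\sum_{n\ge0}(aq)_\infty\beta_n = \sum_{n\ge0}\alpha_n$ (the $\rho_i\to\infty$ limit), with $a=q^4$, base $q^4$; (ii) evaluate the left side using (4.1), simplifying $(-q^4;q^2)_{2n}(q^2;q^4)_{n+1}$ and extracting the constant $(-q^4;q^4)_\infty/(-q^2;q^2)_\infty$, to get the stated $2\sum q^{2n(n+1)}/(q^4;q^8)_{n+1}$ up to that constant; (iii) evaluate the right side using (4.2), splitting into the two sums; (iv) in each piece, recognize the inner sum over $j$ as (a specialization of) a known Bailey-pair identity or use (2.12)-type partial-fraction/$q$-binomial manipulations to rewrite it, and identify the resulting double series as $P_1(q)$ and $P_1(-q)$; (v) equate and clear the constant to obtain (4.3).

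The main obstacle will be step (iv): showing that the $\alpha_n$-side double sum, which naturally comes out indexed by $n$ and the folded variable $j$ with quadratic exponents $2n^2+n-j^2$, can be re-summed into the specific form $\sum_{n\ge0} q^{2n^2+2n}/(-q^2;q^2)_n \sum_{0\le j\le n} q^{j^2+j}/((-q;q^2)_{j+1}(q^2;q^2)_{n-j})$ defining $P_1$. This requires recognizing $\sum_n q^{2n^2+n}z^n \sum_{|j|\le n} q^{-j^2}$-type generating functions as products (a Jacobi-triple-product / false-theta manipulation) and then re-expanding against $1/(q^4;q^4)_\infty$; the bookkeeping of which factor of $(1\pm q^{2n+1})$ pairs with which sign-variant of the theta sum, and verifying it reproduces exactly the $(-q;q^2)_{j+1}$ in the denominator of $P_1$, is where care is needed. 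I expect the cleanest route is to run the coefficient-of-$z^n$ generating-function argument exactly as in the proof of Lemma 2.7 (equations (2.20)--(2.23)), but now keeping $c=q^{1/2}$ rather than sending $c\to 0$.
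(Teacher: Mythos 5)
Your overall shape (apply Bailey's lemma to the pair (4.1)--(4.2) and match the two sides) is the paper's shape, but the specific specialization you choose in step (i) is wrong, and it matters. With both $\rho_1,\rho_2\to\infty$ in base $q^4$ relative to $a=q^4$, the weight on the $\beta$-side is $a^nq^{4n^2}=q^{4n^2+4n}$, so you would get $2\sum_{n\ge0}q^{4n^2+4n}/\bigl((-q^4;q^2)_{2n}(q^2;q^4)_{n+1}\bigr)$. Since $(-q^4;q^2)_{2n}=(-q^4;q^4)_n(-q^6;q^4)_n$ and $(-q^2;q^4)_{n+1}=(1+q^2)(-q^6;q^4)_n$, this series is $(1+q^2)\cdot 2\sum_{n\ge0}q^{4n(n+1)}/\bigl((-q^4;q^4)_n(q^4;q^8)_{n+1}\bigr)$ --- i.e.\ (up to a constant) the left side of Theorem 2, the seventh-order function $F_2(q^4)$, not the tenth-order $\phi(q^4)$ of Theorem 1. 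No $n$-independent constant such as $(-q^4;q^4)_\infty/(-q^2;q^2)_\infty$ can repair an $n$-dependent mismatch in the summand. To land on $2\sum_{n\ge0}q^{2n(n+1)}/(q^4;q^8)_{n+1}$ you must keep $\rho_2$ finite: the paper takes $\rho_1\to\infty$, $\rho_2=-q$ (effectively $-\sqrt{aQ}=-q^4$ in base $Q=q^4$), which produces the weight $(-q^4;q^4)_nq^{2n(n+1)}$; the factor $(-q^4;q^4)_n$ is exactly what cancels $(-q^4;q^2)_{2n}$ down to $(-q^2;q^4)_{n+1}$ and combines with $(q^2;q^4)_{n+1}$ to give $(q^4;q^8)_{n+1}$, halving the quadratic exponent to $2n^2+2n$ in the process.

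The second gap is step (iv). Your reading of the $\alpha$-side is fine as far as it goes: the two pieces of (4.2) do produce $G_1(-q)$ and $G_1(q)$ with $G_1(q)=\sum_{n\ge0}q^{5n^2+4n}(1-q^{2n+1})\sum_{|j|\le n}(-1)^jq^{-j^2}$ (using that $j^2+j$ is even). But passing from $G_1(\pm q)$ to $P_1(\pm q)$ is a separate, nontrivial identity, namely $(q^2;q^2)_\infty P_1(q)=G_1(q)$ (equation (4.5) in the paper), which is obtained by applying Lemma 2.2 (the (S2) base change of [5]) to the relevant pair and then Bailey's lemma again; this is where the denominators $(-q^2;q^2)_n$ and $(-q;q^2)_{j+1}$ in $P_1$ come from. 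Your proposed substitute --- rerunning the coefficient-of-$z^n$ argument of (2.20)--(2.23) --- computes a $\beta_n$ inside a Bailey pair and cannot by itself deliver an identity equating a Hecke-type double sum to a $q$-hypergeometric double sum. As written, the proposal therefore proves neither the stated left-hand side nor the stated right-hand side.
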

\begin{proof} In Bailey's lemma, we take the $\rho_1\rightarrow\infty,$ $\rho_2=-q,$ case to get
\begin{equation}2\sum_{n\ge0}\frac{(-q^4;q^4)_nq^{2n(n+1)}}{(-q^2;q^2)_{2n+1}(q^2;q^4)_{n+1}}=\frac{(-q^4;q^4)_{\infty}}{(q^4;q^4)_{\infty}}\left(G_1(q)+G_1(-q)\right),\end{equation}
where $G_1(q)=\sum_{n\ge0}q^{5n^2+4n}(1-q^{2n+1})\sum_{|j|\le n}(-1)^jq^{-j^2}.$ Now using (through a simple application of Lemma 2.2) the equation \begin{equation}\sum_{n\ge0}\frac{q^{2n^2+2n}}{(-q^2;q^2)_n}\sum_{0\le j\le n}\frac{q^{j^2+j}}{(-q;q^2)_{j+1}(q^2;q^2)_{n-j}}=\frac{1}{(q^2;q^2)_{\infty}}\sum_{n\ge0}q^{5n^2+4n}(1-q^{2n+1})\sum_{|j|\le n}(-1)^jq^{-j^2},\end{equation} we complete the proof.
\end{proof}
Note that the $q$-series on the left side of (4.3) is one of Ramanujan's 10th order mock theta functions $\phi(q^4)$ [6]. The function $P_1(q)$ may take on many different multiple sum forms, here we only mention the form obtained from using Lemma 2.2. \par
Naturally we may offer more new examples of the mock theta function type like Theorem 1.

\begin{theorem} We have,

\begin{equation}2\sum_{n\ge0}\frac{q^{4n(n+1)}}{(-q^4;q^4)_n(q^4;q^8)_{n+1}}=\frac{1}{(q^4;q^4)_{\infty}}\left(G_2(q)+G_2(-q)\right),\end{equation}
where $G_2(q)=\sum_{n\ge0}q^{7n^2+6n}(1-q^{2n+1})\sum_{|j|\le n}(-1)^jq^{-j^2}.$
\end{theorem}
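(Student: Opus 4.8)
The plan is to imitate the proof of Theorem 1 exactly, but feed a different Bailey pair into Bailey's lemma. Concretely, I would start from the Bailey pair $(\alpha_n,\beta_n)$ relative to $(q^4,q^4)$ recorded in (4.1)--(4.2), which is the $c=q^{1/2}$ specialization of Lemma 2.7 with $q$ replaced by $q^2$. The $\beta_n$ side is the truncated product $\beta_n(q^4,q^4)=2/\bigl((-q^4;q^2)_{2n}(q^2;q^4)_{n+1}\bigr)$, and the $\alpha_n$ side is the explicit combination of indefinite theta sums in (4.2). For Theorem 2 the twist is to apply first a change of base or an extra iteration of Bailey's lemma (Lemma 2.3 with $a=q^4$, which sends $\alpha_n\mapsto q^{4n^2+ \dots}\alpha_n$ and adjusts $\beta_n$ by a convolution with $q^{4j^2}/(q^4;q^4)_{n-j}$) before invoking the limiting case of Bailey's lemma; the shift from the exponent $5n^2+4n$ in $G_1$ to $7n^2+6n$ in $G_2$ is precisely the signature of one more application of an $a^nq^{n^2}$-type insertion at base $q^2$ (i.e.\ an extra $q^{2n^2}$ absorbed into the quadratic form).

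The key steps, in order, are: (i) take the Bailey pair (4.1)--(4.2); (ii) insert it into the appropriate insertion lemma to bump the quadratic form — I expect this to be Lemma 2.3 applied at base $q^4$, producing a new pair whose $\alpha_n$ carries an extra factor $q^{4n^2+4n}$ or similar, so that combined with the $q^{2n^2+n}$ already present in (4.2) the total exponent becomes $7n^2+6n$; (iii) take the $\rho_1,\rho_2\to\infty$ limit in Bailey's lemma, which on the $\beta$-side yields $\sum_n a^nq^{n^2}(a)_n/(q)_n \cdot \beta_n$ with $a=q^4$, collapsing the double shift to a single sum, and on the $\alpha$-side yields $\tfrac{1}{(q^4;q^4)_\infty}\sum_n a^n q^{n^2}\alpha_n$; (iv) observe that the $\alpha$-side, after the substitution $\sum_{|j|\le n}q^{-j^2}$ vs.\ $\sum_{|j|\le n}(-1)^jq^{-j^2}$ splitting, assembles into $G_2(q)+G_2(-q)$ exactly as in Theorem 1, where the $q\to -q$ symmetrization handles the two pieces of (4.2); and (v) simplify the $\beta$-side Pochhammer products to the stated closed form $2\sum_{n\ge0}q^{4n(n+1)}/\bigl((-q^4;q^4)_n(q^4;q^8)_{n+1}\bigr)$, using $(-q^4;q^2)_{2n}=(-q^4;q^4)_n(-q^6;q^4)_n$ and $(q^2;q^4)_{n+1}$ at argument $q^2\mapsto q^4$ giving $(q^4;q^8)_{n+1}$.

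The main obstacle I anticipate is step (ii)--(iv): getting the bookkeeping of the quadratic exponents and the signs to line up so that the $\alpha$-side genuinely telescopes into $G_2(q)+G_2(-q)$ with $G_2(q)=\sum_{n\ge0}q^{7n^2+6n}(1-q^{2n+1})\sum_{|j|\le n}(-1)^jq^{-j^2}$. In (4.2) the $\alpha_n$ is a sum of two terms, one with $(1+q^{2n+1})$ and an unsigned inner sum $\sum_{|j|\le n}q^{-j^2}$, the other with $(1-q^{2n+1})$ and the alternating inner sum; under $q\mapsto -q$ these two swap roles up to elementary factors, so that $G_2(q)+G_2(-q)$ absorbs both. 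Verifying that the prefactors $1/(1-q^2)$, the powers of two, and the base-change normalizations $1/(q^4;q^4)_\infty$ all cancel correctly is the delicate part — but it is entirely parallel to the already-proven Theorem 1, so no new idea is needed, only careful specialization. Once the $\alpha$-side is identified, the $\beta$-side simplification is a routine Pochhammer manipulation.
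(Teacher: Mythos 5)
The paper never writes out a proof of Theorem 2 --- it is stated as another example ``like Theorem 1'' --- but the intended argument is evidently: apply the $\rho_1,\rho_2\to\infty$ case of Bailey's lemma \emph{directly} to the pair (4.1)--(4.2). Your overall strategy (same pair, a limiting case of Bailey's lemma, then the $q\mapsto -q$ symmetrization assembling the two halves of (4.2) into $G_2(q)+G_2(-q)$) is the right one, and your step (iv) is correct: since $(-q)^{7n^2+6n}=(-1)^nq^{7n^2+6n}$ and $(-1)^j(-q)^{-j^2}=q^{-j^2}$, the unsigned inner-sum piece of (4.2) is exactly $G_2(-q)$. The genuine problem is that steps (ii) and (iii), taken together as you order them, double-count the quadratic insertion. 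Lemma 2.3 at base $q^4$ multiplies $\alpha_n$ by $q^{4n^2+4n}$; the $\rho_1,\rho_2\to\infty$ case of Bailey's lemma then weights $\alpha_n$ by \emph{another} factor $a^nq^{n^2}=q^{4n^2+4n}$ (there is no extra $(a)_n/(q)_n$ on the $\beta$-side, by the way --- though with $a=q^4$ in base $q^4$ that ratio happens to be $1$). Since $\alpha_n$ in (4.2) already carries $q^{n(n+1)}\cdot q^{2n^2+n}=q^{3n^2+2n}$, your two steps combined would produce $q^{11n^2+10n}$, not $q^{7n^2+6n}$, and the $\beta$-side would become a double sum rather than the single sum on the left of (4.6). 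Exactly one insertion of $q^{4n^2+4n}$ is needed: either apply the $\rho_1,\rho_2\to\infty$ Bailey lemma directly to (4.1)--(4.2) with no preliminary Lemma 2.3, or (equivalently) apply Lemma 2.3 once and then let $n\to\infty$ in the defining relation (2.1). A smaller slip in your step (ii): $4n^2+4n+2n^2+n=6n^2+5n$, not $7n^2+6n$; you dropped the outer factor $q^{n(n+1)}$ of (4.2), which is what makes the count come out right.

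On the $\beta$-side your step (v) is also not quite a ``routine'' identity as stated: one has $(-q^4;q^2)_{2n}=(-q^4;q^4)_n(-q^6;q^4)_n$ and then $(-q^6;q^4)_n(q^2;q^4)_{n+1}=(q^4;q^8)_{n+1}/(1+q^2)$, so the left side of the Bailey-lemma output is $2(1+q^2)\sum_{n\ge0}q^{4n(n+1)}/\bigl((-q^4;q^4)_n(q^4;q^8)_{n+1}\bigr)$. That stray $(1+q^2)$ must be carried to the right and combined with the $1/(1-q^2)$ coming from (4.2) and the $1/(q^8;q^4)_\infty$ from Bailey's lemma to yield the clean $1/(q^4;q^4)_\infty$ of (4.6). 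This cancellation is the bookkeeping you flag as ``delicate'' but do not perform; once the double-counting in (ii)--(iii) is removed, it goes through.
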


The $q$-series on the left side of (4.6) is one of Ramanujan's 7th order mock theta functions $F_2(q^4).$ \par Zwegers [16, Theorem 1] offered an expansion for the fifth order mock theta function
$\chi_1(q)$ (and its companion $\chi_0$) as a triple sum. It may equivalently be viewed as a special instance of a known Bailey pair with the conjugate Bailey pair due to Bessoud and Singh [4, 13],
\begin{equation}\delta_n=q^n,\end{equation}
\begin{equation}\gamma_n=\frac{q^n}{(q)_{\infty}^2}\sum_{j\ge0}(-1)^jq^{j(j+1)/2+2nj}.\end{equation}

\begin{theorem} We have,
\begin{equation}2\chi_1(q^4)=2\sum_{n\ge0}\frac{q^{4n}}{(q^{4n+4};q^4)_{n+1}}=\frac{1}{(q^4;q^4)_{\infty}^2}\left(G_3(q)+G_3(-q)\right),\end{equation}
where $G_3(q)=\sum_{n,i\ge0}q^{3n^2+6n+2i(i+3)+8ni}(1-q^{2n+1})\sum_{|j|\le n}(-1)^jq^{-j^2}.$
\end{theorem}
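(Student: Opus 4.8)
The plan is to follow exactly the pattern used in Theorems 1 and 2, but now inserting the relevant Bailey pair into Bailey's lemma together with the Bessoud--Singh conjugate Bailey pair $(\gamma_n,\delta_n)$ of (4.7)--(4.8) rather than using a further specialization of $\rho_1,\rho_2$. Concretely, recall that if $(\alpha_n,\beta_n)$ is a Bailey pair relative to $a$ and $(\gamma_n,\delta_n)$ is a conjugate Bailey pair relative to $a$, then $\sum_{n\ge0}\alpha_n\gamma_n=\sum_{n\ge0}\beta_n\delta_n$. First I would take the Bailey pair from Lemma 2.7 with $c=q^{1/2}$ and $q$ replaced by $q^2$, i.e. the pair $(\alpha_n,\beta_n)$ relative to $(q^4,q^4)$ displayed in (4.1)--(4.2), and pair it against the Bessoud--Singh pair (4.7)--(4.8) taken relative to $a=q^4$ with base $q^4$ (so $\delta_n=q^{4n}$ and $\gamma_n=\frac{q^{4n}}{(q^4;q^4)_\infty^2}\sum_{j\ge0}(-1)^jq^{2j(j+1)+8nj}$, reading (4.8) at base $q^4$).

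The $\beta$-side then produces $\sum_{n\ge0}\beta_n(q^4,q^4)\,q^{4n}=2\sum_{n\ge0}\frac{q^{4n}}{(-q^4;q^2)_{2n}(q^2;q^4)_{n+1}}$, and I would simplify $(-q^4;q^2)_{2n}(q^2;q^4)_{n+1}$ to $(q^{4n+4};q^4)_{n+1}$ up to the overall powers, recognizing the result as $2\chi_1(q^4)$ in the Hickerson-type form $\sum_{n\ge0} q^{4n}/(q^{4n+4};q^4)_{n+1}$; this is the routine $q$-Pochhammer bookkeeping and mirrors the left-hand-side manipulations in (4.3) and (4.6). The $\alpha$-side gives $\sum_{n\ge0}\alpha_n(q^4,q^4)\,\gamma_n$, and after pulling out the factor $\frac{1}{(q^4;q^4)_\infty^2}$ from $\gamma_n$ and inserting (4.2), the double sum over $n$ and $i=j$ (the summation index in $\gamma_n$) collects, after combining the $q^{n(n+1)}\cdot q^{2n^2+n}\cdot q^{4n}$ powers in $\alpha_n$ with the $q^{2i(i+1)+8ni}$ from $\gamma_n$, into exactly $G_3(q)+G_3(-q)$ with $G_3(q)=\sum_{n,i\ge0}q^{3n^2+6n+2i(i+3)+8ni}(1-q^{2n+1})\sum_{|j|\le n}(-1)^jq^{-j^2}$; here the two terms of (4.2) — one with $(-1)^nq^{2n^2+n}(1+q^{2n+1})$ and one with $q^{2n^2+n}(1-q^{2n+1})$ — are what fuse into the $G_3(q)+G_3(-q)$ symmetrization exactly as in the earlier theorems, where replacing $q\to -q$ in one copy absorbs the alternating sign.

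The main obstacle I expect is the exponent arithmetic: verifying that $n(n+1)+2n^2+n+4n+2i(i+1)+8ni$ really collapses to $3n^2+6n+2i(i+3)+8ni$ (up to the $2i$ vs $2i(i+3)$ discrepancy, which must come out of the $2i(i+1)+?$ regrouping, so I would want to double-check whether a shift $i\mapsto i$ versus the stated $i(i+3)$ forces a particular normalization of (4.8) at base $q^4$), and confirming that the sign pattern $(-1)^n$ in the first summand of (4.2) is precisely what the $q\to -q$ in $G_3(-q)$ reproduces. A secondary technical point is making sure the conjugate Bailey pair (4.7)--(4.8), which is stated at base $q$, is correctly transcribed to base $q^4$ and relative to $a=q^4$ — the $(q)_\infty^2$ becomes $(q^4;q^4)_\infty^2$ and the exponent $j(j+1)/2+2nj$ becomes $2j(j+1)+8nj$ — and that the indefinite theta piece $\sum_{|j|\le n}(-1)^jq^{-j^2}$ survives the base change untouched, which it does since it came from the $\alpha$-side of Lemma 2.7 where $q$ was already the variable. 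Once these are pinned down, the identity follows by equating the two sides of $\sum\alpha_n\gamma_n=\sum\beta_n\delta_n$.
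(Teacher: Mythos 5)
The paper states Theorem 3 without a written proof, so the comparison here is with the method the paper indicates in the preceding paragraph: $\chi_1$ is to be obtained from ``a known Bailey pair'' (the classical pair underlying Zwegers' triple-sum expansion of $\chi_1$, i.e.\ the Rogers/Andrews pair with $\beta_n=1/(q^{n+1};q)_{n+1}$ relative to $a=q$, transcribed to base $q^4$) together with the Bressoud--Singh conjugate pair. Your proposal instead feeds the pair (4.1)--(4.2) into $\sum\alpha_n\gamma_n=\sum\beta_n\delta_n$, and this is where it breaks: the $\delta$-side does not produce $\chi_1(q^4)$. Concretely, you claim $(-q^4;q^2)_{2n}(q^2;q^4)_{n+1}$ simplifies to $(q^{4n+4};q^4)_{n+1}$ ``up to routine bookkeeping,'' but already at $n=0$ the left side is $1-q^2$ while the right side is $1-q^4$; in general $(-q^4;q^2)_{2n}(q^2;q^4)_{n+1}=(1-q^2)(-q^4;q^4)_n(q^{12};q^8)_n$, which is not $(q^4;q^4)_{2n+1}/(q^4;q^4)_n$. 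So the series $\sum_n\beta_n(q^4,q^4)q^{4n}$ built from (4.1) is a genuinely different $q$-series from $2\chi_1(q^4)$, and no amount of Pochhammer rearrangement will close that gap. The Bailey pair that does give $\beta_n\delta_n=q^{4n}/(q^{4n+4};q^4)_{n+1}$ is the classical one, whose $\alpha_n$ (not (4.2)) is what must feed the double sum $G_3$.

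Two secondary points you flag but leave unresolved are also load-bearing rather than cosmetic. First, the identity $\sum\alpha_n\gamma_n=\sum\beta_n\delta_n$ requires the conjugate pair to be relative to the \emph{same} $a$ as the Bailey pair; the form (4.8), with $(q)_\infty^2$ in the denominator and exponent $j(j+1)/2+2nj$, is the $a=1$ normalization, whereas the pair you want to use is relative to $a=q^4$ in base $q^4$. The $a$-dependent version of the Bressoud--Singh pair carries an extra factor $a^j$ in the $j$-sum and replaces $(q)_\infty^2$ by $(q)_\infty(aq)_\infty$; at $a=q$ this is exactly what turns $j(j+1)/2$ into $j(j+3)/2$, i.e.\ $2i(i+1)$ into the $2i(i+3)$ appearing in $G_3$. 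This is not a normalization you can ``double-check later''; it is the reason the stated exponent has the form it does, and it also changes the prefactor $1/(q^4;q^4)_\infty^2$ by a factor of $(1-q^4)$ that has to be tracked. Second, the $q\mapsto -q$ symmetrization argument you sketch for fusing the two terms of (4.2) into $G_3(q)+G_3(-q)$ is fine in principle (all the $i$-exponents are even and $3n^2+6n\equiv n\pmod 2$), but it is being applied to the wrong $\alpha_n$. As it stands, the proposal would prove an identity for a different left-hand side, so the approach needs to be restarted from the correct Bailey pair rather than patched.
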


In the previous three theorems we can conclude that, in a sense, the ``even part" of $G_i(q)$ for $i=1,2,3$ is a mixed mock modular form [11]. However, more information on these $G_i(q)$ is needed, which will be dealt with in the next section. \par If we appeal to a certain transformation formula found in Fine's text [7, pg.18, eq.(16.3)], and [7, pg.6, eq.(7.31)], we may utilize the pair (2.15)--(2.16) with the conjugate pair (4.7)--(4.8) to get the following result.

\begin{theorem} We have,
\begin{equation}2\sum_{n\ge0}\frac{(-t^2)^nq^{n^2}}{(t^2;q^2)_{n+1}}=\frac{(q;q^2)_{\infty}}{(q^2;q^2)_{\infty}^2}\sum_{n,j\ge0}(-1)^jq^{j(j+1)/2+2nj-n(n-1)/2}\left(\frac{\alpha_n^{*}(q,-1,t,q)}{(1-t)}+\frac{\alpha_n^{*}(q,-1,-t,q)}{(1+t)}\right). \end{equation}

\end{theorem}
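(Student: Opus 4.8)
The plan is to combine the conjugate Bailey pair mechanism with two transformation formulas of Fine. Recall that if $(\alpha_n,\beta_n)$ is a Bailey pair relative to $a$ and $(\gamma_n,\delta_n)$ is a conjugate Bailey pair relative to the same $a$, so that $\gamma_n=\sum_{r\ge n}\delta_r/\big((q)_{r-n}(aq)_{r+n}\big)$, then orthogonality of the two defining matrices gives $\sum_{n\ge0}\alpha_n\gamma_n=\sum_{n\ge0}\beta_n\delta_n$. I would feed the pair (2.15)--(2.16), with $c$ renamed $t$, into this identity together with the Bressoud--Singh conjugate pair (4.7)--(4.8), and then invoke Fine's transformations [7, (7.31)] and [7, (16.3)] to identify the two sides with the two sides of the claimed identity.

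On the $\delta$-side: with $\delta_n=q^n$ the sum $\sum_{n\ge0}\beta_n\delta_n$ is, up to the overall factor $2$ and an infinite product, a single basic hypergeometric series in which $t$ enters only through the Lambert-type denominator $1-t^2q^{2j}$. Fine's identity [7, (16.3)] is exactly the tool that rewrites such a series into one carrying a Durfee-square-type quadratic numerator, and in the present specialisation it converts this series into $2\sum_{n\ge0}(-t^2)^nq^{n^2}/(t^2;q^2)_{n+1}$, the series on the left of the statement.

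On the $\gamma$-side: substituting $\gamma_n=\frac{q^n}{(q)_\infty^2}\sum_{j\ge0}(-1)^jq^{j(j+1)/2+2nj}$ and the explicit $\alpha_n$ of (2.16), whose $q$-prefactor is $q^{-n(n+1)/2}$, the powers of $q$ combine as $q^{-n(n+1)/2}\cdot q^{n}=q^{-n(n-1)/2}$, which is precisely the $n$-dependent part of the exponent $j(j+1)/2+2nj-n(n-1)/2$ appearing in the statement, while the two $\alpha_n^{*}$ ingredients pass through unchanged as the parenthesised combination displayed there. It then remains to produce the correct normalising product: the bare conjugate pair contributes $1/(q)_\infty^2$, whereas one must reach $(q;q^2)_\infty/(q^2;q^2)_\infty^2$. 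Fine's [7, (7.31)], together with the reconciliation of bases noted below, is what effects this passage. Equating the two sides yields the asserted identity.

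The step I expect to be the genuine obstacle is exactly that reconciliation: (2.15)--(2.16) is a Bailey pair relative to $q$ while (4.7)--(4.8) is a conjugate pair relative to $1$, so a naive pairing does not even balance the infinite products, and Fine's two transformations must carry real content in bridging the gap — an honest rewriting of each series, not routine bookkeeping. Concretely I would first isolate and prove the two auxiliary single-variable series identities (the $\delta$-side one equivalent to the pertinent case of [7, (16.3)], the $\gamma$-side one to [7, (7.31)]), check each against the first several coefficients in $q$ and in $t$, and only afterwards assemble the stated identity from the one-line relation $\sum_n\alpha_n\gamma_n=\sum_n\beta_n\delta_n$.
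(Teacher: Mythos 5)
Your proposal follows exactly the route the paper indicates: it proves Theorem 4 by pairing the Bailey pair (2.15)--(2.16) with the Bressoud--Singh conjugate pair (4.7)--(4.8) via $\sum_n\alpha_n\gamma_n=\sum_n\beta_n\delta_n$ and then invoking Fine's transformations [7, (16.3)] and [7, (7.31)] to identify the two sides. The paper supplies no more detail than this one-sentence outline, so your reconstruction (including the flagged issue of reconciling the relative parameters of the two pairs) is essentially the same argument, worked out somewhat more explicitly.
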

This expansion has relevance to mock theta functions in special cases, such as $t=i.$
\newline

We mention there are also certain product expansions that are natural consequences of these types of pairs. One example from (2.15)--(2.16) is:
\begin{equation} \frac{(q^2;q^2)_{\infty}^2}{(t^2;q^2)_{\infty}}=\sum_{n\ge0}q^{n(n+1)/2}\left(\frac{\alpha_n^{*}(q,-1,t,q)}{(1-t)}+\frac{\alpha_n^{*}(q,-1,-t,q)}{(1+t)}\right).\end{equation}

\section{More on $G_i(q)$ in relation to Mock theta functions} 

Recent work from [10] gives us some tools to state a result on our $G_i(q).$ We can certainly see that $G_i(q)$ for $i=1,2,3$ cannot be a modular form. We recall the function 
\begin{equation} f_{a,b,c}(x,y,q):=\left(\sum_{r,s\ge0}-\sum_{r,s<0}\right)(-1)^{r+s}x^{r}y^{s}q^{ar(r-1)/2+brs+cs(s-1)/2}.\end{equation}
The Appell-Lerch series is of the form
\begin{equation}m(x,q,z):=\frac{1}{(x)_{\infty}(q/x)_{\infty}(q)_{\infty}}\sum_{j\in\mathbb{Z}}\frac{(-1)^jq^{j(j-1)/2}z^j}{1-q^{j-1}xz}.\end{equation}
Recent studies [10, 15] have shown specializations of (5.1) can be expressed in terms of (5.2), which allows us to deduce which functions are mock theta functions (see [15 , Chapter 1]). 
\begin{theorem} We have
\begin{equation} \frac{(-q^4;q^4)_{\infty}}{(q^4;q^4)_{\infty}}G_1(q),\end{equation}
\begin{equation} \frac{1}{(q^4;q^4)_{\infty}}G_2(q),\end{equation}
and
\begin{equation} \frac{1}{(q^4;q^4)_{\infty}^2}G_3(q),\end{equation} are mock theta functions.
\end{theorem}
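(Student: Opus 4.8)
The plan is to realize each of the three $G_i(q)$ as a specialization of the indefinite theta function $f_{a,b,c}(x,y,q)$ in (5.1), and then invoke the known results of [10, 15] expressing such $f_{a,b,c}$ in terms of the Appell--Lerch series $m(x,q,z)$ in (5.2), from which the mock-theta property is immediate. The starting point is the explicit formula (4.2) for $\alpha_n(q^4,q^4)$ (and its analogues implicit in Theorems 2 and 3), which already exhibits each $G_i(q)$ as a sum over $n$ and $|j|\le n$ of terms $q^{(\text{quadratic in }n)}(1-q^{2n+1})q^{-j^2}(\pm1)^j$. The first step is to unfold the inner symmetric sum $\sum_{|j|\le n}(-1)^jq^{-j^2}$ and the factor $(1-q^{2n+1})$ and rewrite the whole double (or, for $G_3$, triple) sum as a difference $\bigl(\sum_{r,s\ge0}-\sum_{r,s<0}\bigr)$ of the form (5.1): this is the standard device (going back to Andrews's treatment of $f_0(q)$ in (1.1), and carried out systematically in [10]) by which a Bailey-pair $\alpha_n$ with a $(1-q^{2n+1})$ factor and a truncated Gaussian sum becomes an indefinite binary (or, with the extra $i$-summation from the conjugate pair (4.7)--(4.8), ternary) quadratic form. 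I would do this carefully once for $G_1$, reading off the parameters $(a,b,c)$ and the monomials $(x,y)$ from the exponents $3n^2+\cdots$, $2i(i+3)+8ni$, etc., appearing in the theorem statements, and then indicate that $G_2$ and $G_3$ are handled identically with different parameters.

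The second step is bookkeeping with the modular prefactors. Once $G_i(q)$ is written as $f_{a_i,b_i,c_i}(x_i,y_i,q)$ up to an explicit eta-quotient, I would combine that eta-quotient with the prefactors $(-q^4;q^4)_\infty/(q^4;q^4)_\infty$, $1/(q^4;q^4)_\infty$, and $1/(q^4;q^4)_\infty^2$ appearing in (5.3), (5.4), (5.5). The point is that [10, Theorem ...] / [15, Chapter 1] expresses $f_{a,b,c}(x,y,q)$ as a $\mathbb{Z}$-linear (in fact, with theta-quotient coefficients) combination of Appell--Lerch series $m(\cdot,\cdot,\cdot)$ plus a genuine modular (theta) piece; multiplying through by the appropriate eta-quotient, and checking that the "modular piece'' contributes only a true modular form, shows that (5.3)--(5.5) are each a modular form plus a sum of $m$'s, i.e., a mock theta function in the sense of [15]. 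The $c=q^{1/2}$ / $c\to 0$ remark at the end of Section 2, together with the observation that the $c\to 0$ specializations recover the known sixth-order mock theta pairs of [2], provides a useful consistency check that the parameters have been identified correctly.

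The main obstacle is the first step in the $G_3$ case: there the conjugate pair (4.7)--(4.8) introduces a third summation index $i$, so one must recognize the exponent $3n^2+6n+2i(i+3)+8ni-j^2$ (with $|j|\le n$) as a \emph{ternary} indefinite form and match it to the hypotheses under which [10, 15] give an Appell--Lerch decomposition — the relevant results are typically stated for binary $f_{a,b,c}$, so one either needs the ternary extension from [10] or must first perform the $i$-sum (it is a partial theta / false theta type sum in $q$) to collapse back to a binary form. Verifying that the resulting $(a,b,c)$ lies in the admissible range (e.g.\ $b^2>4ac$ with the appropriate sign conditions on $x,y$ ensuring the indefinite sum converges $q$-adically) is where the real care is needed; the rest is the routine eta-quotient arithmetic of step two. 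For $G_1$ and $G_2$ no such difficulty arises, so I would present those in full and treat $G_3$ by reduction to the binary case.
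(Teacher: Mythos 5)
Your proposal follows essentially the same route as the paper: unfold the factor $(1-q^{2n+1})$ and the symmetric inner sum via $n\mapsto -n-1$ together with the substitutions $n=r+s$, $j=r-s$ to write each $G_i$ as a combination of Hecke-type sums $f_{a,b,c}$, then invoke the Appell--Lerch machinery of [10] (the paper cites eqs.\ (9.8) and (9.15) there, which identify the resulting $f_{2,3,2}$ and $f_{3,4,3}$ specializations with $j(q^4,q^8)\phi(q^4)$ and $j(q^4,q^{12})F_2(q^4)$). The paper, like you, gives details only for $G_1$ and $G_2$, so the extra-summation-index difficulty you flag for $G_3$ is not resolved there either.
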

Note that this theorem implies $P_1(q)$ and similar multi-sums related to $G_i(q)$ are mixed mock modular forms.
\begin{proof} We give the details for $G_1(q)$ and $G_2(q)$ only. We write 
$$G_1(q)=\sum_{n\ge0}\sum_{|j|\le n}(-1)^jq^{5n^2+4n-j^2}-\sum_{n< 0}\sum_{|j|\le -n-1}(-1)^jq^{5n^2+4n-j^2}$$
$$=\left(\sum_{r,s\ge0}-\sum_{r,s<0}\right)(-1)^{r+s}q^{4r^2+12rs+4s^2+4r+4s}+\left(\sum_{r,s\ge0}-\sum_{r,s<0}\right)(-1)^{r+s}q^{4r^2+12rs+4s^2+14r+14s+9}$$
$$=f_{8,12,8}(q^8,q^8,q)+q^9f_{8,12,8}(q^{18},q^{18},q).$$ This is mixed-mock.
For $G_2(q)$ we have
$$G_2(q)=\sum_{n\ge0}\sum_{|j|\le n}(-1)^jq^{7n^2+6n-j^2}-\sum_{n< 0}\sum_{|j|\le -n-1}(-1)^jq^{7n^2+6n-j^2}$$
$$=\left(\sum_{r,s\ge0}-\sum_{r,s<0}\right)(-1)^{r+s}q^{6r^2+16rs+6s^2+6r+6s}+\left(\sum_{r,s\ge0}-\sum_{r,s<0}\right)(-1)^{r+s}q^{6r^2+16rs+6s^2+20r+20s+13}$$
$$=f_{12,16,12}(q^{12},q^{12},q)+q^{13}f_{12,16,12}(q^{26},q^{26},q).$$ This is mixed-mock. The above is done by replacing the second series over $n$ with $-n-1$ and then making the substitutions $n=r+s$ and $j=r-s.$ Note also $f_{8,12,8}(q^8,q^8,q)=f_{2,3,2}(q^8,q^8,q^4).$ Therefore we need to consider $(-q)_{\infty}f_{2,3,2}(q^2,q^2,q)/(q)_{\infty},$ which is included in the Appell-Lerch sum representations in [10, Theorem 0.6]. That is, $f_{2,3,2}(q^8,q^8,q^4)=j(q^4,q^8)\phi(q^4)$ where $j(q^a,q^m)=\sum_{n\in\mathbb{Z}}(-1)^nq^{mn(n-1)/2+an},$ by [10, eq.(9.8)]. Similarly we have by [10, eq.(9.15)]  $f_{12,16,12}(q^{12},q^{12},q) = f_{3,4,3}(q^{12},q^{12},q^4) = j(q^4,q^{12}) F_2(q^4).$ \end{proof}

{\bf Acknowledgement.} We thank Eric Mortenson for some helpful comments on his paper [10], and noting [10, eq.(9.15)], and [10, eq.(9.8)]. We also thank the referee for corrections and improving the details in the proofs in the paper.

1390 Bumps River Rd. \\*
Centerville, MA
02632 \\*
USA \\*
E-mail: alexpatk@hotmail.com

\begin{thebibliography}{9}
\bibitem{ConcreteMath}
G.E. Andrews, \emph{The fifth and seventh order mock theta functions,} Trans. Amer. Math. Soc. 293 (1986), no. 1, 113--134.
\bibitem{ConcreteMath}
G.E. Andrews and D. Hickerson, \emph{Ramanujan's ``lost" notebook. VII. The sixth order mock theta functions,} Adv. Math. 89 (1991), no. 1, 60--105.

\bibitem{ConcreteMath}
George E. Andrews, \emph{q-Orthogonal polynomials, Rogers-Ramanujan identities, and mock theta functions,} Proceedings of the Steklov Institute Dedicated to the 75th Birthday of A. A. Karatsuba, 276:21-32 (2012)
\bibitem{ConcreteMath}
D. M. Bressoud, \emph{Some identities for terminating q-series,} Math. Proc. Cambridge Philos. Soc. 89 (1981), 211--223.

\bibitem{ConcreteMath}
D.M. Bressoud, M. Ismail, D. Stanton, \emph{Change of base in Bailey pairs,} Ramanujan J. 4 (2000), no. 4, 435--453.

\bibitem{ConcreteMath}
Y.-S. Choi, \emph{Tenth order mock theta functions in Ramanujan's Lost Notebook,} Invent. Math. 136 (1999), 497--569.

\bibitem{ConcreteMath}
N. J. Fine, \emph{Basic Hypergeometric Series and Applications,} Math. Surveys 27, AMS Providence, 1988.

\bibitem{ConcreteMath}
 B. Gordon and R.J. McIntosh, \emph{Some eighth order mock theta functions,} J. London Math. Soc. (2) 62 (2000),
321--335.
\bibitem{ConcreteMath}
E. Hecke, \emph{Uber einen Zusammenhang zwischen elliptischen Modulfunktionen und indefiniten quadra-
tischen Formen,} Mathematische Werke, Vandenhoeck and Ruprecht, Gottingen, 1959, pp. 418--427.
\bibitem{ConcreteMath}
 D. Hickerson and E. Mortenson, \emph{Hecke-type double sums, Appell--Lerch sums, and mock theta functions I,} Proc.
London Math. Soc. 109 (2014), 382--422.

\bibitem{ConcreteMath}
J. Lovejoy and R. Osburn, \emph{Mixed mock modular q-series,}
J. Indian Math. Soc., Special Volume to commemorate the 125th Birth Anniversary of Srinivasa Ramanujan and the National Mathematics Year - 2012 (2013), 45--61.

\bibitem{ConcreteMath}
J. Lovejoy, R. Osburn, \emph{q-hypergeometric double sums as mock theta functions,}
Pacific J. Math. 264 (2013), 151--162.

\bibitem{ConcreteMath}
U. B. Singh, \emph{A note on a transformation of Bailey,} Quart. J. Math. Oxford Ser. (2) 45 (1994), 111--116.

\bibitem{ConcreteMath}
S. O. Warnaar, \emph{50 years of Bailey's lemma,} Algebraic combinatorics and applications (Goßweinstein, 1999),
333--347, Springer, Berlin, 2001.
\bibitem{ConcreteMath}
S. Zwegers, \emph{Mock Theta Functions,} PhD Thesis, Universiteit Utrecht (2002).

\bibitem{ConcreteMath}
S. Zwegers, \emph{On two fifth order mock theta functions,} Ramanujan J (2009) 20: 207--214.

\end{thebibliography}
\end{document}